\newcommand\numberthis{\addtocounter{equation}{1}\tag{\theequation}}
\newtheorem{thm}{Theorem}
\newtheorem{lem}[thm]{Lemma}
\newtheorem{prop}[thm]{Proposition}
\title{Large unavoidable subtournaments}
\author{Eoin Long \thanks{School of Mathematical Sciences, Tel Aviv University, Tel Aviv, Israel. E-mail: eoinlong@post.tau.ac.il}}
\date{18 February 2015}
\begin{document}

\maketitle

\begin{abstract}
	Let $D_k$ denote the tournament on $3k$ 
	vertices consisting of three disjoint vertex classes
	$V_1$, $V_2$ and $V_3$ of size $k$, each 
	oriented as a transitive subtournament, and with edges 
	directed from $V_1$ to $V_{2}$, from $V_2$ to $V_{3}$ 
	and from $V_3$ to $V_{1}$. Fox 
	and Sudakov proved that given a natural number $k$ and 
	$\epsilon > 0$ there is $n_0(k,\epsilon )$ such 
	that every tournament of order 
	$n \geq n_0(k,\epsilon )$ which is 
	$\epsilon $-far from being transitive 
	contains $D_k$ as a subtournament. 
	Their proof showed that $n_0(k,\epsilon ) \leq 
	\epsilon ^{-O(k/\epsilon ^2)}$ and they
	conjectured that this  could be reduced to 
	$n_0(k, \epsilon ) \leq \epsilon ^{-O(k)}$. 
	Here we prove this conjecture.
\end{abstract}

\section{Introduction}
	Ramsey theory refers to a large and active branch of combinatorics 
	mainly concerned with understanding which properties of a structure 
	are preserved in dense substructures or upon finite partition. 
	It is often introduced with the phrase `complete disorder is 
	impossible', attributed to Motzkin, and part of the subject's 
	growth can be attributed to the surprising variety of contexts in 
	which this philosophy can be applied (for a small sample, see 
	\cite{PaAg}, \cite{AlSp}, \cite{Furs}, \cite{GRS}).

	A central result in the area is Ramsey's theorem 
	\cite{Ramsey}, which says that given any natural number 
	$k$, there is an integer $N$ such that every 2-colouring of 
	the edges of the complete graph $K_N$ contains a monochromatic copy of $K_k$. An 
	important problem in the area is to estimate the smallest value 
	of $N$ for which the theorem holds, denoted $R(k)$. It is known 
	that $2^{(1/2 +o(1))k} \leq R(k) \leq 4^{(1+o(1))k}$ (see 
	\cite{Erd}, \cite{Sp}, \cite{ErdSz}, \cite{C}). 
	
	Clearly it is not possible to guarantee 
	the existence of non-monochromatic cliques in general 2-colourings 
	of $K_N$. 
	Bollob\'as raised the 
	question of which 2-coloured subgraphs occur in  
	2-colourings of $K_N$ where each colour appears on 
	at least $\epsilon $ proportion of the edges. Let ${\mathcal F}_k$ 
	denote the collection of 2-coloured graphs of order $2k$, 
	in which one colour appears as either a clique of order 
	$k$ or two disjoint cliques of order $k$. Bollob\'as 
	asked whether, given a natural $k$ and $\epsilon > 0$ 
	there is $M = M(k, \epsilon )$ with the following property: in 
	every 
	2-colouring of the edges of $K_{M}$ containing both 
	colours on at least $\epsilon $ proportion of the edges,
	some element of ${\mathcal F}_k$ appears as a coloured subgraph. 
	Cutler and Mont\'agh \cite{CM} answered this question 
	in the affirmative and proved that it is possible to take 
	$M(k,\epsilon ) \leq 4^{k/\epsilon }$. Fox and Sudakov 
	\cite{FS} subsequently improved this bound to show that $M(k, \epsilon ) \leq \epsilon ^{-ck}$, 
	for some constant $c>0$. 
	As shown in \cite{FS}, this bound is tight up 
	to the value of the constant $c$ in the exponent, which can be seen 
	by taking a random 2-colouring of a graph on 
	$\epsilon ^{-(k-1)/2}$ vertices with appropriate densities.
	
	Here we will be concerned with an analogous question for tournaments.
	A \emph{tournament} is a directed graph obtained 
	by assigning a direction to the edges of a complete 
	graph. A tournament is said to be \emph{transitive} 
	if it is possible to order the vertices of the 
	tournament so that all of its edges point in the 
	same direction. Let $T(k)$ denote the smallest 
	integer such that every tournament on $T(k)$ vertices  
	contains a transitive subtournament on $k$ vertices. 
	A classic result due to Erd\H{o}s and Moser \cite{EM} shows 
	that $T(k)$ is finite for all $k$ and gives 
	that $2^{(k-1)/2} \leq T(k) \leq 2^{k-1}$ (in fact, as pointed out 
	by the referee, the upper bound here is 
	attributed to Stearns in \cite{EM}).
	
	As in the case of $2$-colouring graphs, it is natural
	to ask which subtournaments must occur in every
	large tournament which is `not too similar' to a 
	transitive tournament. An $n$-vertex tournament 
	$T$ is \emph{$\epsilon$-far from 
	being transitive} if in any ordering of the vertices 
	of $T$, the direction of at least $\epsilon n^2$ 
	edges of $T$ must be switched in order to obtain a 
	transitive tournament. In \cite{FS}, Fox and Sudakov 
	asked the following question: given 
	$\epsilon>0$, which subtournaments must an $n$-vertex 
	tournament which is $\epsilon $-far from being transitive 
	contain? 
	
	For any natural number $k$, let $D_k$ denote the
	tournament on $3k$ vertices consisting of three 
	disjoint vertex classes $V_1$, $V_2$ and $V_3$ of size 
	$k$, each 
	oriented as a transitive subtournament, 
	and with all edges directed from $V_1$ to $V_{2}$, 
	from $V_2$ to $V_{3}$ and from $V_3$ to $V_{1}$.
	Taking $T = D_{n/3}$ we obtain an $n$-vertex 
	tournament which is $\frac{1}{9}$-far from being transitive 
	and whose only subtournaments are contained in $D_k$ 
	for some $k$. Thus, subtournaments of $D_k$ are the 
	only candidates for unavoidable tournaments which 
	occur in large tournaments that are $\epsilon $-far from 
	transitive for small $\epsilon $. In \cite{FS}, Fox and 
	Sudakov proved that subtournaments necessarily appear in 
	large tournaments which are $\epsilon $-far from being 
	transitive.
	
	\begin{thm}[Fox--Sudakov]
	\label{F_k subtournament}
	Given $\epsilon > 0$ and a natural number $k$, there is 
	$n_0(k,\epsilon )$ such that if $T$ is a tournament 
	on $n \geq n_0(k,\epsilon )$ vertices which is 
	$\epsilon $-far from being transitive, then $T$ 
	contains $D_k$ as a subtournament. Furthermore $n_0(k, \epsilon)
	\leq \epsilon ^{-ck/\epsilon ^2}$, for some absolute 
	constant $c>0$. \endproof
	\end{thm}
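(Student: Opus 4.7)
The plan is to establish Theorem \ref{F_k subtournament} via a two-stage strategy: first, extract from $T$ a ``clean'' cyclic tripartite structure, and then apply the Erd\H{o}s--Moser transitive-tournament bound inside each of its three parts. The first stage should produce three disjoint vertex sets $A, B, C \subseteq V(T)$, each of size at least $2^{k-1}$, such that every $A$-to-$B$, $B$-to-$C$, and $C$-to-$A$ edge respects the cyclic pattern. Once these are in hand, applying $T(k) \leq 2^{k-1}$ inside each part yields transitive suborderings of size $k$, which together form the desired copy of $D_k$.

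For the first stage, I would begin by showing that any tournament $T$ which is $\epsilon$-far from transitive contains $\Omega(\epsilon^2 n^3)$ directed 3-cycles. Fixing a median ordering of $V(T)$, the number of backward edges is at least $\epsilon n^2$, and each such edge sits as the base of many directed triangles via a standard double-counting argument. Averaging over vertex triples then locates a directed 3-cycle $(x, y, z)$ which is ``extended'' by many witness triples $(a,b,c)$ in the analogous cyclic pattern. A dependent-random-choice-type argument then extracts disjoint sets $A_0, B_0, C_0$ of size roughly $n \cdot \mathrm{poly}(\epsilon)$ with cyclic edge densities $d(A_0 \to B_0)$, $d(B_0 \to C_0)$, $d(C_0 \to A_0)$ each exceeding $1 - \delta$, for a parameter $\delta = \delta(k,\epsilon)$ to be fixed.

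To pass from the biased sets $A_0, B_0, C_0$ to \emph{completely} clean sets $A, B, C$, I would iteratively delete any vertex incident to too many wrong-direction edges, then restrict to a uniformly random subset. This pruning costs a factor exponential in $1/\delta$ in the size of each set but guarantees that the resulting blow-up is edgewise consistent with the cyclic pattern. A final application of Erd\H{o}s--Moser inside each cleaned part then produces the desired transitive subtournaments on $k$ vertices and completes the construction.

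The main obstacle lies in choosing $\delta$ to accommodate two competing constraints. On the one hand, Erd\H{o}s--Moser inside each cleaned part needs $|A|, |B|, |C| \geq 2^{k-1}$, which forces $\delta$ to be small and hence makes the pruning step expensive. On the other hand, pumping each of the three cyclic densities up to $1-\delta$ in the initial extraction requires running the dependent-random-choice step $\Omega(1/\delta)$ times, losing a factor of $\epsilon^{-\Theta(1)}$ per round. The tension between these two exponentials in $1/\delta$ is precisely what forces a bound of the shape $\epsilon^{-O(k/\epsilon^2)}$, and overcoming it by interleaving the extraction and cleaning steps (rather than separating them) is the natural route toward the improved bound $\epsilon^{-O(k)}$ conjectured by Fox and Sudakov.
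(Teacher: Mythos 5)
This theorem is cited from \cite{FS} rather than proved in the paper, but its proof structure is visible through Theorems~\ref{number of directed triangles} and \ref{many directed triangles give a copy of D_k} and the proof of Lemma~\ref{many imbalanced triangles}, which explicitly modifies the \cite{FS} argument. Your high-level two-stage plan (count directed triangles, then build a cyclic tripartite structure and apply Erd\H{o}s--Moser inside each part) is the right outline, and the opening claim that $\epsilon$-far forces $\Omega(\epsilon^2 n^3)$ directed triangles is indeed the first ingredient in \cite{FS}. The middle step, however, has a genuine flaw. You propose to extract $A_0, B_0, C_0$ of size $n\cdot\mathrm{poly}(\epsilon)$ with cyclic densities exceeding $1-\delta$, and then ``clean'' by deleting high-wrong-degree vertices and taking random subsets so that $|A|,|B|,|C|\geq 2^{k-1}$ with \emph{every} cross edge respecting the cyclic pattern. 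Pruning only bounds the maximum wrong out-degree; it does not produce a complete bipartite graph. To actually locate a complete $K_{s,s}$ with $s = 2^{k-1}$ inside a $(1-\delta)$-dense bipartite graph on $m\times m$ vertices, any standard route (K\H{o}v\'ari--S\'os--Tur\'an, dependent random choice, or a random restriction) requires $m \gtrsim s(1-\delta)^{-s} \approx s\,e^{\delta 2^{k-1}}$. Even for modest $\delta$ this forces $n$ to be exponential in $2^{k-1}$, i.e. doubly exponential in $k$, which is far beyond $\epsilon^{-O(k/\epsilon^2)}$. Your closing diagnosis that the tension between two exponentials in $1/\delta$ is what produces the bound $\epsilon^{-O(k/\epsilon^2)}$ is therefore not accurate: the real bottleneck is the attempt to fully clean sets whose target size is itself exponential in $k$.

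The actual argument sidesteps this by never aiming for completely clean sets of Erd\H{o}s--Moser size $2^{k-1}$. Instead (as in Lemma~\ref{many imbalanced triangles}, which follows the \cite{FS} template), after a random tripartition one applies dependent random choice to a bipartite auxiliary graph of ``good'' directed edges to obtain $W_1\subset V_1$ of size $n^{\Omega(1)}$ with the far stronger property that \emph{every} $d$-subset of $W_1$ (with $d\approx k/\epsilon^2$) has a polynomially large common forward neighbourhood in $V_2$. Erd\H{o}s--Moser inside $W_1$ then gives a transitive $S_1$ of size $\approx\log n \geq d$; its common neighbourhood $N_H(S_1)$ is still $n^{\Omega(1)}$, Erd\H{o}s--Moser there gives a transitive $S_2$ with all $S_1\to S_2$ edges automatically forward, and the third class is extracted by a K\H{o}v\'ari--S\'os--Tur\'an argument applied to the directed triangles extending a matching in $S_1\times S_2$. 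The completeness of the cyclic pattern comes for free from the common-neighbourhood structure rather than from pruning a dense graph, and the transitive pieces are found inside host sets of polynomial (not exponential-in-$k$) size; that is precisely what keeps the bound at $\epsilon^{-O(k/\epsilon^2)}$ and is the ingredient missing from your proposal.
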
 
	
	The authors in \cite{FS} conjectured that this bound can 
	be further reduced to $n_0(k,\epsilon ) \leq 
	\epsilon ^{-Ck}$ for some absolute constant $C>0$. 
	This order of growth agrees with high probability 
	with a random tournament obtained by directing 
	edges backwards independently with probability 
	$\approx \epsilon $. Here we prove this conjecture.
	
\begin{thm}
	\label{improved D_k bound}
	There is a constant $C>0$ such that for  
	$\epsilon > 0$ and any natural number $k$ we have 
	$n_0(k,\epsilon ) \leq \epsilon ^{-Ck}$.
\end{thm}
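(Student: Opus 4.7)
My plan is to work in a vertex ordering that minimises backward edges, locate three intervals with many backward edges between their outer two, and then extract a copy of $D_k$ inside these intervals via a dependent-random-choice argument together with the Erd\H{o}s--Moser bound $T(k) \leq 2^{k-1}$. Fix a linear order of the vertices minimising the number of backward edges; since $T$ is $\epsilon$-far from transitive, at least $\epsilon n^2$ edges are backward. In this order, a copy of $D_k$ corresponds to three sets of positions $V_1 < V_2 < V_3$ of size $k$ such that each $V_i$ spans a transitive subtournament and the cross-edges are oriented $V_1 \to V_2$, $V_2 \to V_3$, $V_3 \to V_1$. By optimality of the order, between any two disjoint position intervals $I < J$ the forward edges outnumber the backward ones, so the forward density between any such pair is at least $1/2$. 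A dyadic pigeonhole over the pairs $(i,j)$ carrying backward edges then produces three disjoint intervals $I_1 < I_2 < I_3$, all of size $N = \Omega(\epsilon n)$, with the backward density from $I_3$ to $I_1$ at least $\Omega(\epsilon)$.

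The main task is to find subsets $W_i \subseteq I_i$, each of size at least $2^{k-1}$, such that every cross-edge between them has the correct orientation; the Erd\H{o}s--Moser bound then yields a transitive $k$-subtournament inside each $W_i$, and these three pieces assemble into $D_k$. I would carry this out with a tripartite dependent-random-choice argument whose key idea is to use the middle interval $I_2$ as a test set. Sample a small random subset $S \subseteq I_2$ of size of order $k$, and restrict $I_1, I_3$ to the common forward-predecessors $L(S) \subseteq I_1$ and forward-successors $R(S) \subseteq I_3$ of $S$. A convexity / second-moment argument should show that for a typical $S$, both $|L(S)|$ and $|R(S)|$ retain a $2^{-O(k)}$-fraction of their parent intervals, while the bipartite backward density between $L(S)$ and $R(S)$ is still $\Omega(\epsilon)$. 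A standard bipartite dependent-random-choice on this backward graph then extracts $W_1 \subseteq L(S)$ and $W_3 \subseteq R(S)$ of size $\geq 2^{k-1}$ with every pair joined by a backward edge, and a parallel sub-sampling inside $I_2$ produces $W_2$ of size $\geq 2^{k-1}$ with the correct forward cross-edges to both $W_1$ and $W_3$. Starting from $n \geq \epsilon^{-Ck}$ with $C$ sufficiently large absorbs all multiplicative losses.

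\emph{Main obstacle.} The delicate part is forcing the three pairwise cross-densities to be $1$ while paying only $\epsilon^{-O(k)}$ total in set size, rather than the $\epsilon^{-\Omega(k^2)}$ one would get from a naive vertex-by-vertex extension or the $\epsilon^{-\Omega(k/\epsilon^2)}$ arising from a regularity-style approach. The asymmetry between the tiny backward density $\Omega(\epsilon)$ on the $I_1$--$I_3$ side and the $\Omega(1)$ forward densities on the other two sides must be exploited, so that the expensive dependent-random-choice step is applied only once, on the sparse bipartite graph between $I_1$ and $I_3$, while the middle interval's test-set role cleans up the other two pairs essentially for free.
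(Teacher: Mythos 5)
There is a genuine and fatal gap in the final extraction step. You want to find $W_1\subseteq L(S)$ and $W_3\subseteq R(S)$, \emph{both} of size at least $2^{k-1}$, with \emph{every} pair $W_1\times W_3$ joined by a backward edge, so that Erd\H{o}s--Moser can be applied inside each. That is a complete bipartite graph $K_{s,s}$ with $s=2^{k-1}$ inside a bipartite graph of density only $\Theta(\epsilon)$. Whether one uses K\H{o}vari--S\'os--Tur\'an or dependent random choice, a $K_{s,s}$ in a density-$\epsilon$ graph forces the ambient part sizes to be at least roughly $\epsilon^{-s}=\epsilon^{-2^{k-1}}$; in the DRC lemma you would have to take $\ell\geq d=2^{k-1}$ just to get $|A'|\geq 2^{k-1}$, incurring a loss factor of $\epsilon^{\ell}=\epsilon^{2^{k-1}}$. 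This is a doubly exponential (in $k$) loss, nowhere near the target $\epsilon^{-O(k)}$. The asymmetry you correctly flag as the main obstacle is in fact not exploited by your argument: the expensive sparse-graph step is still being asked to produce a balanced complete bipartite graph with both sides exponential in $k$, and no choice of the constant $C$ in $n\geq\epsilon^{-Ck}$ will absorb that.

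A secondary but real issue: you assert that optimality of the ordering gives forward density $\geq 1/2$ between \emph{any} two disjoint position intervals $I<J$. That is true for \emph{adjacent} intervals (swapping them changes the backward count by $\text{fwd}(I,J)-\text{bwd}(I,J)$), but it is false in general when there is material between $I$ and $J$; moving a block past intermediate vertices changes their relation to $I$ as well, so one only gets a statement about $I$ versus the union $M\cup J$. Since you want $I_1$ and $I_3$ to carry high backward density (hence low forward density), while $I_1$--$I_2$ and $I_2$--$I_3$ carry high forward density, the intervals cannot all be consecutive \emph{and} carry the desired densities without a more careful pigeonhole. This can probably be patched, but it is worth noting the optimality claim as stated is wrong.

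The paper's proof avoids your bottleneck by a different structural idea. It never asks for a complete bipartite graph with both sides large in a sparse graph. Instead, after a density increment (Lemma~\ref{many long edges or density boost}), it shows that a $\Theta(\epsilon)$-proportion of backward edges each lie in $\Omega(n)$ directed triangles (Lemma~\ref{many disjoint triangles}); crucially this triangle-richness parameter $\gamma$ is an absolute constant, independent of $\epsilon$. It then (Lemma~\ref{many imbalanced triangles}) applies DRC once to the sparse bipartite graph of triangle-rich backward edges to build two transitive sets $S_1,S_2$ of size $d=O(k)$ with $S_1\to S_2$, takes a $d$-matching $e_1,\dots,e_d$ across them, and applies K\H{o}vari--S\'os--Tur\'an to the bipartite graph between these $d$ edges and $V_3$. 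Since each $e_i$ has $\gamma n/3$ triangle partners with $\gamma$ constant, that graph has \emph{constant} density, and one only needs a $K_{k,\,n^{1/2}}$ (one small side of size $k$, not $2^{k-1}$), which is cheap. The three final classes of $D_k$ each have size $k$, obtained as $k$-element subsets of transitive sets of size $O(k)$; nowhere does one need a balanced $2^{k-1}\times 2^{k-1}$ complete bipartite subgraph of a sparse graph. That is the idea your proposal is missing.
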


	Before beginning on the proof let us mention two other 
	results related to Theorems \ref{F_k subtournament} and 
	\ref{improved D_k bound}. 
	A tournament $T$ is said to be \emph{$c$-colourable} if it is possible to 
	partition $V(T)$ into $c$ subsets, each of which is a transitive 
	subtournament. The \emph{chromatic number} $\chi (T)$ of a tournament $T$ 
	equals the smallest value of $c$ such that $T$ is $c$-colourable. 
	A tournament $H$ is said to be a \emph{hero} if every $H$-free 
	tournament has bounded chromatic number. The definition of a hero 
	was introduced in 
	by Berger, Choromanski, Chudnovksy, Fox, Loebl, Scott, Seymour and 
	Thomass\'e in \cite{BCCFLSST} and their main result gave an 
	explicit description of heroes. This notion was recently extended by 
	Shapira and Yuster \cite{SY}. A tournament $H$ is said to be 
	\emph{$c$-unavoidable} if for every $\epsilon >0$ and 
	$n \geq n_0(\epsilon ,H)$, every $n$-vertex tournament $T$ that is 	
	$\epsilon$-far 
	from satisfying $\chi (T) \leq c$ contains a copy of $H$. 
	A tournament $H$ is said 
	to be \emph{unavoidable} if it is $c_H$-unavoidable for some constant 
	$c_H$. Clearly a tournament is $1$-chromatic if and only if it is 
	transitive. Thus from Theorem 
	\ref{F_k subtournament} and the discussion preceding it, 
	$1$-unavoidable tournaments are precisely those tournaments which 
	appear as subtournaments of $D_k$ for some $k$. In \cite{SY} this 
	result was extended to show that a tournament $H$ is unavoidable 
	iff it is a transitive blowup of a hero (see \cite{BCCFLSST} and 
	\cite{SY} for the precise definitions).

	\textit{Notation:} Given a tournament $T$, we write $V(T)$ to denote 
	its vertex set and $E(T)$ to denote the directed edge set of $T$. 
	Given $v\in V(T)$ and a set $S \subset V(T)$, let $d_S^-(v) 
	:= |\{u\in S:\overrightarrow {uv} \in E(T)\}|$ and $d_S^+(v) := 
	|\{u\in S:\overrightarrow {vu} \in E(T)\}|$. We will also write $T[S]$ 
	to denote the induced subtournament of $T$ on vertex set $S$. 
	Given $B \subset E(T)$, we write 
	$d^-_B(v) = |\{u \in V(T): \overrightarrow{uv} \in B\}|$ and 
	$d^+_B(v) = |\{u \in V(T): \overrightarrow{vu} \in B\}|$. For 
	an ordering $v_1,\ldots ,v_{|T|}$ of $V(T)$ and $1\leq i<j\leq |T|$, let 
	$[v_i,v_j] := \{v_i,v_{i+1},\ldots ,v_j\}$. Lastly, all $\log $ functions will 
	be to the base $2$.
	
	\section{Finding many long backwards edges in $T$}
	
	In \cite{FS}, Theorem \ref{F_k subtournament} 
	was deduced from two results of 
	independent interest. The first result  showed 
	that any tournament which is $\epsilon$-far from 
	being transitive must contain many directed 
	triangles.
	
	\begin{thm}[Theorem 1.3 in \cite{FS}]
	\label{number of directed triangles}
	Any $n$-vertex tournament $T$ which is 
	$\epsilon $-far from being transitive contains 
	at least $c \epsilon ^2 n^3$ directed triangles, 
	where $c>0$ is an absolute constant. \endproof
	\end{thm}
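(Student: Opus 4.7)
The plan is to fix an ordering $\sigma$ of $V(T)$ that minimises the set $B$ of backward edges, exploit its local optimality, and count the directed triangles by their positional middle vertex. With $\sigma=(v_1,\ldots,v_n)$ and $B:=\{\overrightarrow{v_jv_i}\in E(T):i<j\}$, the hypothesis that $T$ is $\epsilon$-far from transitive gives $|B|\ge\epsilon n^2$.

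A short case check shows that a triple $v_i,v_k,v_j$ with $i<k<j$ spans a directed triangle in $T$ if and only if the three edges $(v_iv_k,v_kv_j,v_iv_j)$ follow the pattern (forward, forward, backward) or (backward, backward, forward); in particular the positional middle vertex is always the ``middle'' vertex of the triangle. Thus, splitting the neighbourhood of each $v_k$ into forward/backward and left/right parts $L^F(v_k),L^B(v_k),R^F(v_k),R^B(v_k)$, the number of directed triangles equals
\[
\sum_{k=1}^{n}\Bigl(\bigl|\{(u,w)\in L^F(v_k)\times R^F(v_k):w\to u\}\bigr|+\bigl|\{(u,w)\in L^B(v_k)\times R^B(v_k):u\to w\}\bigr|\Bigr).
\]

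To lower-bound this sum I would exploit the optimality of $\sigma$: moving $v_k$ past any initial segment of vertices on either side of it cannot decrease $|B|$, which forces, window by window, the number of forward out-edges of $v_k$ into the window to dominate the number of backward in-edges there (and symmetrically on the left). This gives quantitative control on the sizes of $L^F,R^F,L^B,R^B$ across vertices and, in particular, implies $\sum_k|L^F(v_k)|\cdot|R^F(v_k)|=\Omega(n^3)$. Since directed triangles of the first type correspond to pairs (backward edge in $B$, forward 2-path through some $v_k$ joining its endpoints), a Cauchy--Schwarz or averaging argument that combines $|B|\ge\epsilon n^2$ with the $\Omega(n^3)$ forward-path count should yield the desired $\Omega(\epsilon^2 n^3)$ directed triangles.

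The main obstacle will be this last step. The difficulty is that backward edges can concentrate on pairs $(u,w)$ whose interval $(u,w)$ is short or has few useful intermediate vertices, so one cannot simply multiply global densities. To circumvent this, the optimality of $\sigma$ has to be used not merely in aggregate but to control the distribution of $B$ through each individual vertex---in particular to rule out concentration of backward edges in short ranges. The $\epsilon^2$ dependence arises naturally from this balancing: one factor of $\epsilon$ comes from the density of $B$, another from the forward-path density supplied by optimality, with Cauchy--Schwarz bridging the two.
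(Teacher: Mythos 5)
First, note that the paper does not actually prove Theorem~\ref{number of directed triangles}; it is quoted from Fox and Sudakov \cite{FS} with no proof given (the \verb|\endproof| after the statement simply closes the theorem box). So there is no in-paper proof to compare against, and your proposal must be judged on its own terms.

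Your setup is sound: the observation that a triple $v_i,v_k,v_j$ with $i<k<j$ spans a directed triangle iff the three edges (in positional order) read (forward, forward, backward) or (backward, backward, forward) is correct, and the bound $\sum_k |L^F(v_k)|\cdot |R^F(v_k)|=\Omega(n^3)$ does follow from Proposition~\ref{simple properties of ordrering}, since $|L^F(v_k)|\ge (k-1)/2$ and $|R^F(v_k)|\ge (n-k)/2$ in an optimal ordering. The gap is exactly where you flag it, but your suggested fix cannot work. You propose to ``rule out concentration of backward edges in short ranges,'' but the paper's own extremal example (the disjoint union of $k$ copies of $D_{n/3k}$, discussed right after Theorem~\ref{number of directed triangles}) shows this is impossible: in that tournament $\epsilon\approx 1/9k$, \emph{every} backward edge has length $\Theta(n/k)=\Theta(\epsilon n)$, so backward edges really do live entirely in short ranges, and the triangle count meets the bound $\Theta(\epsilon^2n^3)$ exactly. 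This means the quantity you need to bound, namely $\sum_k|\{(u,w)\in L^F(v_k)\times R^F(v_k):\overleftarrow{uw}\in B\}|$, genuinely cannot be controlled by multiplying the global density $\epsilon$ of $B$ against the global $\Omega(n^3)$ wedge count; in the extremal example almost all wedges $(u,w)\in L^F(v_k)\times R^F(v_k)$ straddle distinct blocks and have zero chance of being a backward edge. There is no bilinear form here for Cauchy--Schwarz to act on, and the $\epsilon^2$ does not come from ``one $\epsilon$ from $B$ and one from the wedge density.''

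What is true, and what a correct proof must exploit, is that backward edges are dense \emph{locally}: by Proposition~\ref{simple properties of ordrering}, each backward edge of length $\ell$ contributes at least one directed triangle (giving only $\Omega(\epsilon n^2)$ triangles, which is too weak), and to do better one must either zoom into a short interval where the backward edges cluster and renormalize $\epsilon$ upward (a density increment of the kind carried out in Lemma~\ref{many long edges or density boost}), or use a vertex-splitting recursion of the type: pick a vertex $v$, place $N^-(v)$ before $v$ and $N^+(v)$ after, note that every edge oriented from $N^+(v)$ to $N^-(v)$ closes a directed triangle through $v$, bound these by the number of triangles through a well-chosen $v$, and recurse on the two halves. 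Either route engages with the local structure that your global averaging cannot see. As written, your final step would fail.
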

	
	As pointed out in \cite{FS}, this bound is also best possible 
	in general, as can be seen from the following tournament. 
	Let $T$ be given by taking $k$ copies of 
	${D}_{n/3k}$, say on disjoint vertex sets 
	$V_{1},\ldots ,V_k$ with all edges between 
	$V_i$ and $V_j$ directed forward, for $i<j$. As at least 
	$(n/3k)^2$ edges from each copy of $D_{n/3k}$ must be 
	reoriented in order to obtain a transitive tournament, 
	$T$ is $k(1/3k)^2 = 1/9k$ far 
	from being transitive, but contains only 
	$k.(n/3k)^3 = n^3/27k^2$ directed triangles. Taking 
	$\epsilon = 1/9k$, we see that the growth rate here 
	agrees with that given by Theorem 
	\ref{number of directed triangles} up to constants.
	
	Our first improvement in the bound for $n_0(k,\epsilon )$ 
	comes from showing that any tournament which is $\epsilon $-far 
	from being transitive must either contain many more 
	directed triangles than the number given 
	in Theorem \ref{number of directed triangles} or 
	contain a slightly smaller subtournament which 
	is $2\epsilon $-far from being transitive. This density 
	increment argument will allow one of the factors 
	of $\epsilon $ to be removed from the exponent in 
	the bound on $n_0(k,\epsilon )$ in Theorem \ref{F_k subtournament}.

	Given an ordering $v_1,\ldots ,v_{|T|}$ of the vertices of a 
	tournament $T$, edges of the form $\overleftarrow{v_iv_j}$ with 
	$i<j$ are called \emph{backwards edges}. We will often list the 
	vertices of tournaments in an order which minimizes the number 
	of backwards edges. Such orderings are said to be \emph{optimal}. 
	The following proposition gives some simple but useful properties 
	of optimal orderings.
		
	\begin{prop}
		\label{simple properties of ordrering}
		Suppose that $T$ is a tournament on $n$ vertices and let 
		$v_1,\ldots ,v_n$ be an optimal ordering of $V(T)$. Then 
		the following hold:		
		\begin{enumerate} 
			\item For every $i,j \in [n]$ with $i<j$ we have 
				\begin{itemize}
					\item $d^+_{[v_{i+1},v_j]}(v_i) \geq 
						(j-i)/2$;
					\item $d^-_{[v_{i},v_{j-1}]}(v_j) \geq 
						(j-i)/2$.
				\end{itemize}
			\item If $T[v_{i+1},v_j]: = T[\{v_{i+1},\ldots ,v_j\}]$ has 
			$\delta (j-i)^2$ backwards edges in this ordering then 
			the subtournament $T[v_{i+1},v_{j}]$ is $\delta $-far 
			from being transitive.
		\end{enumerate}
	\end{prop}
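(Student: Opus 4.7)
My plan is to exploit the optimality of the ordering via local ``swap'' arguments. The underlying observation is that for any ordering of $V(T)$, the number of backwards edges equals the number of edges one must reverse to obtain a transitive tournament in that ordering; so if an ordering is optimal, no local perturbation can strictly decrease the number of backwards edges. I would formalise both parts by identifying a perturbation whose effect on the total backwards count is controlled exactly by the quantity we wish to bound.

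For part (1), I would prove the first bullet by considering the new ordering obtained from $v_1,\dots,v_n$ by extracting $v_i$ and reinserting it directly after $v_j$, leaving the relative order of every other pair unchanged. The only pairs whose orientation status (backwards vs.\ forwards in the ordering) changes are the pairs $\{v_i,v_\ell\}$ for $\ell\in\{i+1,\dots,j\}$: forwards edges from $v_i$ into this block become backwards, and backwards edges from this block into $v_i$ become forwards. Hence the change in the total number of backwards edges is exactly $d^+_{[v_{i+1},v_j]}(v_i)-d^-_{[v_{i+1},v_j]}(v_i)$, and by optimality of the original ordering this is nonnegative. Combined with $d^+_{[v_{i+1},v_j]}(v_i)+d^-_{[v_{i+1},v_j]}(v_i)=j-i$, this gives $d^+_{[v_{i+1},v_j]}(v_i)\geq (j-i)/2$. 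The second bullet is symmetric: move $v_j$ to directly before $v_i$, and the same accounting yields $d^-_{[v_i,v_{j-1}]}(v_j)\geq (j-i)/2$.

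For part (2), the key claim is that the induced ordering $v_{i+1},\dots,v_j$ on $V(T[v_{i+1},v_j])$ is itself optimal for this subtournament; the statement then follows immediately from the hypothesis that this induced ordering has $\delta(j-i)^2$ backwards edges. To see this, suppose some reordering $w_1,\dots,w_{j-i}$ of $\{v_{i+1},\dots,v_j\}$ produced strictly fewer backwards edges inside the block. Then substituting this order in place of $v_{i+1},\dots,v_j$ within the global ordering yields a new ordering of $V(T)$ that preserves the relative order of every pair not contained in the block, so the number of backwards edges outside the block and between the block and its complement is unchanged. Thus the new ordering would have strictly fewer backwards edges overall, contradicting optimality of $v_1,\dots,v_n$.

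I do not anticipate a real obstacle here; the only delicate point is the bookkeeping in the swap argument, namely verifying carefully that the two perturbations used in part (1) alter the orientation status of precisely the $j-i$ pairs involving $v_i$ (respectively $v_j$) and no others, so that the sign of the change is determined solely by $d^{\pm}$ within the block. Once that accounting is made explicit, both parts are short consequences of optimality.
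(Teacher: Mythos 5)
Your proposal is correct and follows essentially the same route as the paper: for part (1) you move $v_i$ to just after $v_j$ (and symmetrically for $v_j$) and invoke optimality, and for part (2) you observe that reordering the block $v_{i+1},\dots,v_j$ in place cannot reduce the backwards-edge count, so the induced ordering is optimal for the subtournament. The paper phrases these as contrapositives, but the swap arguments and accounting are identical.
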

	
	\begin{proof} If we had $d^+_{[v_{i+1},v_j]}(v_i) <(j-i)/2$, then the ordering $v_1,\ldots, v_{i-1},v_{i+1},\ldots ,v_j,v_i, v_{j+1},\ldots ,v_n$
	would decrease the number of backwards edges of $T$. A similar switch works 
	if $d^-_{[v_{i},v_{j-1}]}(v_j) < (j-i)/2$. Lastly, if $v_{k_1} ,\ldots , 
	v_{k_{j-i}}$ was an ordering of $[v_{i+1},v_j]$ with fewer than 
	$\delta (j-i)^2$ backwards edges, then the order of $V(T)$ given by 
	$v_1,\ldots ,v_{i},v_{k_1} ,\ldots , v_{k_{j-i}}, v_{j+1}, \ldots , v_n$ 
	would have less backwards edges than $v_1,\ldots ,v_n$. \end{proof}
	
	Given an ordering $v_1,\ldots ,v_n$ of $V(T)$ with a backwards edge 
	$\overleftarrow{v_iv_j}$ ($i<j$), the edge 
	$\overleftarrow{v_i	v_j} \in B$ is said to have \emph{length} $j-i$.
	
	\begin{lem}
	\label{many long edges or density boost}
	Suppose that $T$ is a tournament on $n$ 
	vertices which is $\epsilon $-far from being 
	transitive and let $v_1,\ldots ,v_n$ be an optimal 
	ordering of $V(T)$. Let $B$ denote the collection  
	of backwards edges in this ordering. Then one of the 
	following holds:
	\begin{enumerate}
		\item The subset $B'$ of $B$ consisting 
		of those edges of length at 
		least $n/16$ satisfies $|B'| \geq |B|/4$;
		\item $T$ contains a subtournament on 
		at least $n/8$ vertices which is 
		$2\epsilon $-far from being transitive.
	\end{enumerate}
	\end{lem}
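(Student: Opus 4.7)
My plan is to argue by contrapositive: assume (1) fails and deduce (2). Since the ordering is optimal and $T$ is $\epsilon$-far from being transitive, flipping the backwards edges transitizes $T$, so $|B| \geq \epsilon n^2$. Failure of (1) means fewer than $|B|/4$ backwards edges have length at least $n/16$, so the set $B_s$ of short backwards edges (length strictly less than $n/16$) satisfies $|B_s| > 3|B|/4 \geq 3\epsilon n^2/4$.

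The key observation is that short edges are trapped in small windows. Split the ordering into 16 consecutive blocks $I_1,\ldots, I_{16}$ of $n/16$ vertices and form the 15 overlapping double blocks $J_k := I_k \cup I_{k+1}$ for $1 \leq k \leq 15$, each of size $n/8$. Any short edge $\overleftarrow{v_iv_j}$ satisfies $j - i < n/16 = |I_k|$, so its endpoints either sit inside a common $I_k$ (and hence inside $J_k$ if $k \leq 15$ or $J_{k-1}$ if $k \geq 2$) or straddle two adjacent blocks $I_k$ and $I_{k+1}$ (and hence inside $J_k$). Thus every short edge is contained in some $J_k$.

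Summing over the 15 double blocks gives $\sum_{k=1}^{15}|B \cap E(T[J_k])| \geq |B_s| \geq 3|B|/4$, so by averaging some $J_{k^*}$ contains at least $|B|/20 \geq \epsilon n^2/20$ backwards edges in the induced ordering. Since $|J_{k^*}| = n/8$, this count is at least $(\epsilon n^2/20)/(n/8)^2 \cdot |J_{k^*}|^2 = (16/5)\epsilon \cdot |J_{k^*}|^2 > 2\epsilon \cdot |J_{k^*}|^2$, so Proposition~\ref{simple properties of ordrering}(2), applied to the interval $J_{k^*}$, yields that $T[J_{k^*}]$ is $2\epsilon$-far from being transitive, delivering (2).

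The main technical point is calibrating the block size: it must be large enough that a double block has the required $n/8$ vertices, yet small enough (exactly matching the length threshold $n/16$ from (1)) that no short edge can escape a double block; the loss factors $3/4$ and $1/15$ together leave just enough density to clear the target $2\epsilon$. Divisibility issues ($n$ not a multiple of 16) are absorbed in the slack between $16\epsilon/5$ and $2\epsilon$ by standard rounding.
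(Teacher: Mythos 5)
Your proof is correct, and it takes a cleaner route than the paper's. The paper first reduces to the case that $T$ itself is not $2\epsilon$-far (so $|B| < 2\epsilon n^2$), then splits into cases depending on whether the first or last $n/8$ vertices already carry enough backwards edges, and finally averages over a \emph{sliding window}: it picks a uniformly random interval of length $n/8$ among all $7n/8 + 1$ possible starting positions and shows that each surviving short edge lands in at least $n/16$ of them, giving an expected edge count of at least $2\epsilon (n/8)^2$. You instead work directly with $|B|\ge\epsilon n^2$ and a fixed covering family of $15$ overlapping double blocks $J_1,\dots,J_{15}$; the key observation that a backwards edge of length less than $n/16$ must sit inside some $J_k$ replaces both the sliding-window probability estimate and the $T_{\mathrm{first}}/T_{\mathrm{last}}$ case analysis. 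The arithmetic is tighter in your version ($|B|/20 \geq \epsilon n^2/20 = \tfrac{16}{5}\epsilon(n/8)^2 > 2\epsilon(n/8)^2$), and you avoid the initial reduction entirely. Both arguments ultimately rest on the same principle (short edges concentrate in size-$n/8$ intervals, apply Proposition~\ref{simple properties of ordrering}(2)), so the gain is in exposition rather than strength, but your fixed overlapping-block decomposition is the more economical of the two. One small point worth stating explicitly rather than deferring to ``standard rounding'': the construction needs $n\geq 16$, which indeed holds because if $n<16$ every backwards edge has length $\ge 1 > n/16$, so $B'=B$ and alternative (1) holds trivially.
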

	
	\begin{proof}
	We can assume that $T$ itself is not $2\epsilon$-far 
	from being transitive, as otherwise 2. above  would trivially 
	hold. Thus $\epsilon n^2 \leq |B| < 2\epsilon n^2$. Let us 
	assume that $|B'| < |B|/4$, i.e. 1. fails. Note that this gives 
	$n\geq 16$. We wish to 
	show that there exists  $S \subset V(T)$ with $|S| \geq n/8$ such 
	that $T[S]$ is $2\epsilon $-far from being transitive. To prove 
	this, by part 2 of Proposition \ref{simple properties of ordrering}, 
	it suffices to find an interval $v_{i+1},\ldots ,v_j$ with 
	$j-i \geq n/8$ containing at least $2\epsilon(j-i)^2$ 
	edges from $B$.	

	If either $T_{first} = T[v_1,\ldots ,v_{n/8}]$ or $T_{last} = T[v_{7n/8 + 1},\ldots ,v_n]$ 
	have at least $2\epsilon (n/8)^2 = \epsilon n^2/32$ backwards edges then we done. Otherwise, let 
	$E$ denote the subset of $B$ consisting of those backwards edges not 
	in $B'$ and not in $T_{first}$ or $T_{last}$. From the above bounds 
	\begin{align}
		\label{equation: bound on |E|}
		|E| > |B| - |B'| - 2\frac{\epsilon n^2}{32} 
		> \frac{3|B|}{4} - \frac{\epsilon n^2}{16} \geq \frac{\epsilon n^2}{2}.
	\end{align}
		
	Now given $i\in [0,7n/8]$, let $T_i$ denote the subtournament of $T$ 
	which is given by 
	$T_i = T[\{v_{i+1},\ldots ,v_{i + n/8}\}]$. Choose $i\in [0,7n/8]$ uniformly 
	at random and let $E_i$ denote the random variable which counts the number of 
	edges of $E$ which lie in $T_i$. As each element $e \in E$ has 
	length at most $n/16$, with at least one endpoint in $\{v_{n/8},\ldots ,v_{7n/8}\}$, 
	there are at least $n/16$ choices of $i$ with $e \in T_i$. As $n\geq 8$, 
	this gives
	\begin{align*}
		\mathbb {P}(e \in T_i) \geq \frac {n/16}{7n/8+1} \geq \frac{1}{16}.
	\end{align*}
	By linearity of expectation, combined with \eqref{equation: bound on |E|} 
	this gives 
	\begin{align*}
	\mathbb {E}(E_i) 
		& = 	
	\sum _{e \in E} \mathbb {P}(e \in T_i) \geq 
			\frac{|E|}{16} \geq 
			\frac{\epsilon n^2}{32} = 
			2\epsilon (\frac{n}{8})^2.
	\end{align*}
	Fix a value of $i$ such that $E_i$ is at least as large as its expectation. Then  as
	$T_i$ has $n/8$ vertices and at least 
	$2\epsilon (n/8)^2$ backwards edges. By Proposition 
	\ref{simple properties of ordrering}, $T_i$ is 
	$2 \epsilon$-far from being transitive, as required. 
	\end{proof}
	
	\section{Finding many directed triangles in $T$}	
	
	Our second lemma will show that in a tournament  
	with few backwards edges, many of which have large 
	length, there is a large subset of backwards edges 
	which all lie in many directed triangles.
	
	\begin{lem}
	\label{many disjoint triangles}
	Let $T$ be an $n$-vertex tournament with an optimal 
	ordering $v_1,\ldots ,v_n$ 
	and let $B$ denote the set of backwards edges 
	in this ordering, $|B| = \alpha n^2$.
	Suppose that the subset $B' \subset B$ of 
	backwards edges with length at least $n/16$ 
	satisfies $|B'| \geq \alpha n^2/4$. 
	Then, provided that $\alpha \leq 2^{-16}$, there 
	exists $B''\subset B'$ satisfying 
	$|B''| \geq |B'|/2$ with the property that 
	each edge of $B''$ lies in at least $n/64$ 
	directed triangles in $T$.
	\end{lem}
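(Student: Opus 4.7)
My plan is to bound below the number of directed triangles through each $e = \overleftarrow{v_iv_j}\in B'$ by counting only ``interior'' third vertices $v_k\in(v_i,v_j)$: such a vertex completes a triangle with $e$ iff $v_i\to v_k$ and $v_k\to v_j$ are both forward. Writing $\ell_e=j-i\geq n/16$ and letting $\alpha_e^i$ (resp.\ $\alpha_e^j$) count the backwards edges $\overleftarrow{v_iv_k}\in B$ (resp.\ $\overleftarrow{v_kv_j}\in B$) with $i<k<j$, a short inclusion-exclusion gives the elementary bound $t_e \geq (\ell_e-1) - \alpha_e^i - \alpha_e^j$. I will use this to identify the ``bad'' subset of $B'$ and show it has size at most $|B'|/2$.

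The structural engine is a ``spread'' consequence of Proposition~\ref{simple properties of ordrering}. Listing the backwards edges out of $v_i$ as $\overleftarrow{v_iv_{k_1}},\dots,\overleftarrow{v_iv_{k_m}}$ with $k_1<\cdots<k_m$, applying the Proposition to $[v_{i+1},v_{k_s}]$ gives $s\leq(k_s-i)/2$, hence $k_s\geq i+2s$. So if $e$ is the $s$-th backwards edge out of $v_i$, then $\alpha_e^i=s-1$ while $\ell_e\geq\max(2s,\,n/16)$; a symmetric estimate holds from $v_j$'s side.

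Now let $V^-_{\mathrm{high}} = \{v : d_B^-(v) > n/64\}$ and $V^+_{\mathrm{high}} = \{v : d_B^+(v) > n/64\}$; each has size $\leq 64\alpha n$ by Markov. The key claim is that $v_j\notin V^+_{\mathrm{high}}$ already forces $t_e\geq n/64$, since $\alpha_e^j\leq d_B^+(v_j)\leq n/64$ and
\[
t_e \;\geq\; \ell_e - s - n/64 \;\geq\; \max(s,\, n/16 - s) - n/64 \;\geq\; n/32 - n/64 \;=\; n/64,
\]
using that one of $s$ and $n/16-s$ is always at least $n/32$. A symmetric argument handles $v_i\notin V^-_{\mathrm{high}}$. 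Consequently, every $e\in B'$ with $t_e < n/64$ must have both $v_i\in V^-_{\mathrm{high}}$ and $v_j\in V^+_{\mathrm{high}}$, so the number of bad edges is at most $|V^-_{\mathrm{high}}|\cdot|V^+_{\mathrm{high}}|\leq (64\alpha n)^2 = 2^{12}\alpha^2 n^2$.

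The hypothesis $\alpha\leq 2^{-16}$ is placed precisely so this last quantity is $\leq\alpha n^2/16\leq|B'|/2$, and $B''$ can be taken to be the complement. The main obstacle I expect is to resist the naive global double-count $\sum_{e\in B'}(\alpha_e^i+\alpha_e^j)\leq\sum_v(d_B^-(v))^2+\sum_v(d_B^+(v))^2\leq n|B|$, which loses a factor of roughly $n\alpha$ in the ``concentrated'' regime where one vertex accumulates many backwards edges; the spread estimate $k_s\geq i+2s$ sidesteps this by coupling each increment of $\alpha_e^i$ to a length increase of $2$ in $\ell_e$, yielding the required per-edge triangle count directly.
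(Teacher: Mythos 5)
Your proof is correct, and its skeleton matches the paper's: both arguments lower-bound the triangles through $e=\overleftarrow{v_iv_j}$ by inclusion--exclusion on the interval $(v_i,v_j)$, use Proposition~\ref{simple properties of ordrering} to control one endpoint, and use a Markov-type bound on $d_B^\pm$ to show that the set of edges with \emph{both} endpoints of high backwards degree is at most $|B'|/2$. The difference is purely in bookkeeping: the paper defines $B''$ via the $\alpha$-dependent interval-degree threshold $4\alpha^{1/2}n$ and then invokes Proposition~\ref{simple properties of ordrering} directly to get $d^+_{[v_{i+1},v_j]}(v_i)\geq(j-i)/2$, whereas you use the absolute threshold $n/64$ and encode the same proposition as the ``spread'' bound $k_s\geq i+2s$, which is just the identity $d^+_{[v_{i+1},v_{k_s}]}(v_i)=\ell_e-s\geq\ell_e/2$ in disguise. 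Your version is arguably slightly cleaner because the degree threshold is the same constant $n/64$ that appears in the conclusion, but the two proofs are mathematically equivalent and both consume the hypothesis $\alpha\leq 2^{-16}$ in the same final step $2^{12}\alpha^2 n^2\leq\alpha n^2/16\leq|B'|/2$.
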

	
	\begin{proof}
	Given $B'$ as in the statement of the lemma, 
	let $B'' \subset B'$ be the set
	\begin{align*}
		B'' := \{\overleftarrow{v_iv_j} \in B': 
				\mbox{ either } 
				d^-_{[v_{i+1},v_j]}(v_i) \leq 
				4\alpha ^{1/2} n
				\mbox{ or } 
				d^+_{[v_{i},v_{j-1}]}(v_j) \leq 
				4\alpha ^{1/2} n \}
	\end{align*}
	We first claim that $|B''| \geq |B'|/2$. To see this 
	let $S_- = \{v_i \in V(T): d_B^-(v_i) \geq  
	4 \alpha ^{1/2} n\}$ and let $S_+ = \{v_i \in V(T): d_B^+(v_i) \geq 4 \alpha ^{1/2}n \}$. 
	Using that 
	\begin{align*}
		4\alpha ^{1/2} n|S_-| \leq \sum _{i \in S_-} d_B^-(v_i) \leq 
	\sum _{i \in [n]} d_B^-(v_i) = |B|,
	\end{align*} 
	gives $|S_-| \leq |B|/4 \alpha ^{1/2} n = \alpha ^{1/2}n/4$. Similarly 
	$|S_+| \leq  \alpha ^{1/2}n/4$. But all edges 
	$\overleftarrow{v_iv_j} \in B'\setminus B''$ 
	have $v_i \in S_-$ and $v_j \in S_+$. This 
	gives 
	\begin{align*}
		|B' \setminus B''| \leq |S_-||S_+| 
		 \leq 
		 ( \alpha ^{1/2}n/4 )^2 
		 = \alpha n^2/16.
	\end{align*}
	But then $|B''| \geq |B'| - \alpha n^2/16 \geq |B'|/2$, as claimed.
	
	Now recall that by the definition of $B'$, for every 
	$\overleftarrow{v_iv_j} \in B''$ we have 
	$j-i \geq n/16$. Also, by Proposition \ref{simple properties of ordrering} 
	part 1 we have $d^+_{[v_{i+1},v_j]}(v_i) \geq (j-i)/2 $ and 
	$d^-_{[v_i,v_{j-1}]}(v_j) \geq (j-i)/2 $.
	Furthermore, as $\overleftarrow{v_iv_j} \in B''$ we must 
	have either
	\begin{align*}
		d^+_{[v_{i+1},v_j]}(v_i) \geq (j-i) - 4 \alpha ^{1/2}n \geq 3(j-i)/4
	\end{align*}
	or
	\begin{align*}
	d^-_{[v_i,v_{j-1}]}(v_j) \geq (j-i) - 4 \alpha ^{1/2}n 
		\geq 3(j-i)/4.
	\end{align*}
	The inequalities hold here since $\alpha \leq 2^{-16}$ and $j-i \geq n/16$
	gives $(j-i)/4 \geq n/2^6 \geq 4\alpha ^{1/2}n$. 
	Thus for every edge $\overleftarrow{v_iv_j} \in B''$ there 
	are at least $(j-i)/4 \geq n/2^6$ vertices $v_k \in \{v_{i+1},\ldots ,v_{j-1}
	\}$ such that $\overrightarrow{v_iv_k}$ and $\overrightarrow{v_kv_j}$ are edges. 
	But this gives that every edge of $B''$ lies in at least 
	$n/2^6$ directed triangles, as claimed.	
	\end{proof}
	
	\section{Finding a copy of $D_k$ in $T$}
	
	The second half of our argument is based on another result 
	from \cite{FS}. Here the authors proved that the following holds:
	
	\begin{thm}[Theorem 3.5 in \cite{FS}]
	\label{many directed triangles give a copy of D_k}
	Any $n$-vertex tournament with at least $\delta n^3$ directed 
	triangles contains $D_k$ as a subtournament provided that 
	$n \geq \delta ^{-4k/ \delta }$. \endproof
	\end{thm}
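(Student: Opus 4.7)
The plan is to build the three transitive classes of $D_k$ simultaneously, adding one vertex to each class per round over $k$ rounds. At stage $t$, I would maintain disjoint transitive sequences $Y_1^{(t)}, Y_2^{(t)}, Y_3^{(t)}$ of size $t$ with all edges between $Y_i^{(t)}$ and $Y_{i+1}^{(t)}$ oriented $Y_i \to Y_{i+1}$ (indices modulo $3$), along with three reservoirs $R_i^{(t)}$ of vertices eligible to be appended as the new sink of $Y_i^{(t)}$: they must extend the transitive order on $Y_i^{(t)}$ and have the correct edges to each of the three current classes.

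To advance from stage $t$ to $t+1$, I would locate a directed triangle $a\to b\to c\to a$ with $a\in R_1^{(t)}, b\in R_2^{(t)}, c\in R_3^{(t)}$, adjoin $a,b,c$ to $Y_1, Y_2, Y_3$ respectively, and restrict each reservoir to the vertices still compatible with the three new sinks. The existence of such a triangle follows from a counting argument provided the number of ``tripartite directed triangles'' inside $R_1^{(t)}\times R_2^{(t)}\times R_3^{(t)}$ is positive, which is $\Theta(\delta n^3)$ at the start. By sampling a triangle whose three vertices each lie in many such tripartite triples, I would ensure that each reservoir loses at most a $\delta^{O(1)}$ factor per round; after $k$ successful rounds, $Y_1,Y_2,Y_3$ form the required $D_k$, which yields the theorem once $n \geq \delta^{-O(k/\delta)}$.

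The main obstacle is the coupled nature of the three reservoirs: each round imposes three simultaneous compatibility constraints (one per reservoir), so the tripartite triangle count can degrade faster than any single reservoir size. Tracking only the reservoir sizes in the naive manner appears to give a much cleaner bound of shape $n \geq 2^{O(k)}$, but this ignores the fact that tripartite triangle counts inside shrinking reservoirs can vanish well before the reservoirs themselves do. Matching the stated $\delta^{-O(k/\delta)}$ will require controlling how the tripartite triangle density transfers between rounds -- most likely via a dependent-random-choice step on the cyclic-triangle $3$-graph restricted to the current reservoirs, or a potential function balancing the triangle counts across the three classes. This amortised bookkeeping, rather than any genuinely new combinatorial input, is where I expect the technical heart of the argument to lie.
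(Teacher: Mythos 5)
The paper cites this result from Fox--Sudakov without proof, but the argument of Lemma~\ref{many imbalanced triangles} is an explicit modification of the Fox--Sudakov proof of this theorem, so the intended method is visible from the paper: randomly tripartition $V(T)$ into $V_1,V_2,V_3$; keep only the ``good'' $V_1\to V_2$ edges that complete to many directed triangles through $V_3$; run bipartite dependent random choice to find $W_1\subset V_1$ such that every small subset of $W_1$ has a large common neighbourhood in $V_2$; apply Erd\H{o}s--Moser to pull a transitive $S_1\subset W_1$ and then a transitive $S_2$ inside the common neighbourhood of $S_1$; fix a matching in $T[S_1,S_2]$; and use K\"ovari--S\'os--Tur\'an to find $k$ matching edges sharing a large common ``triangle-completing'' set in $V_3$, followed by one last Erd\H{o}s--Moser application there. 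Your greedy, $k$-round embedding with three coupled shrinking reservoirs is a genuinely different route.

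It does, however, contain a real gap, and you have already put your finger on it. Nothing in your scheme keeps the tripartite directed-triangle count inside $R_1^{(t)}\times R_2^{(t)}\times R_3^{(t)}$ positive across $k$ rounds. After fixing $(a,b,c)$, the next triple of reservoirs is cut out by nine bipartite edge constraints, and the surviving triangle count can collapse far faster than the product of the reservoir sizes suggests: a tripartite triangle survives only if all nine of its incidences to $a,b,c$ go the right way, and your rule of picking $(a,b,c)$ so that each endpoint lies in many tripartite triples controls $|R_i^{(t+1)}|$ but says nothing about how many triangles land inside the triple intersection of all nine halves. The fix you gesture at---a dependent-random-choice step on the cyclic-triangle $3$-uniform hypergraph restricted to the reservoirs---is precisely the missing combinatorial input rather than an amortisation detail; quantitatively useful $3$-uniform DRC is itself nontrivial, and Fox--Sudakov sidestep it entirely by abandoning the greedy coupling and building a whole transitive $d$-set in one shot via \emph{bipartite} DRC, then transporting the triangle structure to $V_3$ by a matching plus Zarankiewicz. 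Until you either carry out the hypergraph DRC step or switch to such a one-shot construction, the proposal is a strategy sketch rather than a proof.
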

	
	By combining Lemma \ref{many long edges or density boost} and 
	Lemma \ref{many disjoint triangles} with Theorem 
	\ref{many directed triangles give a copy of D_k} it is 
	already possible to improve the bound $n_0(k,\epsilon )$, to 
	show that $n_0(k, \epsilon ) \leq \epsilon ^{-ck/\epsilon }$ for some 
	fixed constant $c>0$. To remove the additional $\epsilon $ term from 
	the exponent, we need to modify Theorem 
	\ref{many directed triangles give a copy of D_k}. 

	The next lemma shows that if many directed triangles in Theorem 
	\ref{many directed triangles give a copy of D_k} occur in a very 
	unbalanced manner, meaning that each of these triangles contain an edge from 
	a small set, the lower bound on $n$ in Theorem \ref{many directed triangles give a copy of D_k} can 
	be reduced. Note that this is exactly the situation given by Lemma 
	\ref{many disjoint triangles}.
	
	\begin{lem}
		\label{many imbalanced triangles}
		Let $T$ be an $n$-vertex tournament and let $E$ be a set of $\beta n^2$ edges in $T$. Suppose that each edge 
		of $E$ occurs in at least $\gamma n$ directed triangles in $T$. 
		Then $T$ contains $D_k$ as a subtournament provided  
		$n \geq \beta ^{-100k/ \gamma }$.
	\end{lem}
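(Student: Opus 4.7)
The naive approach of applying Theorem \ref{many directed triangles give a copy of D_k} directly with $\delta=\beta\gamma/3$ (the overall triangle density implied by the hypothesis) yields only the weaker bound $n\geq(\beta\gamma)^{-\Theta(k/(\beta\gamma))}$; obtaining the stronger bound here requires exploiting the hypothesis that \emph{every} edge of $E$, not merely an average edge of $T$, lies in at least $\gamma n$ directed triangles. The plan is therefore to retrace the iterative construction of $D_k$ used in the proof of Theorem \ref{many directed triangles give a copy of D_k}, replacing the use of the bulk triangle count by an edge-by-edge averaging argument on $E$.

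Concretely, I would build the third class $V_3$ of $D_k$ one vertex at a time, maintaining at each stage a transitive sub-tournament $W\subseteq V(T)$, disjoint candidate sets $A,B\subseteq V(T)\setminus W$ with $W\to A$ and $B\to W$ in $T$, and a set $E'\subseteq E$ of edges from $A$ to $B$ each of which still forms a directed triangle with every vertex of $W$. To extend $W$ by a new vertex, one averages over candidates $w\in A$: since every edge of $E'$ still has at least $\gamma n$ triangle partners in $V(T)$, standard counting produces a $w\in A$ that is a triangle partner of a positive $\gamma$-fraction of $E'$. Adjoining $w$ to $W$ (which preserves transitivity since $w\in A$ forces $W\to w$), and replacing $A$, $B$, $E'$ by the natural restrictions to $N^+(w)$, $N^-(w)$, and the triangle-partners of $w$, preserves the invariant. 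The key quantitative point is that the per-step shrinkage of $|E'|$ is controlled by $\gamma$ alone, not by the product $\beta\gamma$, which is what ultimately allows $\beta\gamma$ to be replaced by $\beta$ in the base and by $\gamma$ in the exponent of the bound.

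After sufficiently many iterations $W$ will provide $V_3$, and the surviving edges of $E'$ in $A\times B$ will form a dense bipartite directed graph. A K\H{o}v\'ari--S\'os--Tur\'an-type biclique argument, combined with Erd\H{o}s--Moser applied within each side of the biclique to promote it to a transitive sub-tournament, then extracts $V_1\subseteq A$ and $V_2\subseteq B$, each a transitive tournament of size $k$, with $V_1\to V_2$, completing the copy of $D_k$.

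The main obstacle is that the $\gamma n$ triangle partners of an $E'$-edge live anywhere in $V(T)$, whereas for the iteration to maintain transitivity of $W$ we need the new vertex $w$ to lie in the current $A$. I would handle this either by a cleaning step that discards $E'$-edges whose triangle partners are too concentrated outside $A$, or by a two-level averaging argument that first passes to a slightly larger ambient set and then projects back into $A$. Balancing the number of iterations (which should contribute the $k/\gamma$ factor in the exponent) against the per-step shrinkage (which should determine the base $\beta^{-1}$) to achieve exactly the claimed bound $n\geq\beta^{-100k/\gamma}$ is the delicate quantitative heart of the proof.
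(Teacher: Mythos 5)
Your plan takes a genuinely different route from the paper, but the critical step is unresolved, so as written the proposal has a real gap rather than being an alternative proof.

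The paper's argument is a one-shot construction, not an iterative embedding. It begins by taking a \emph{random equipartition} of $V(T)$ into $V_1,V_2,V_3$ and showing that, in expectation, a $\tfrac{1}{27}$-fraction of $E$ survives as a set $E_{good}$ of edges directed from $V_1$ to $V_2$ each of which has at least $\gamma n/3$ triangle partners \emph{inside} $V_3$. This single step resolves precisely the obstacle you flag (``the $\gamma n$ triangle partners of an $E'$-edge live anywhere in $V(T)$''): after the random split, the triangle partners of the surviving edges are forced into a designated third class. The paper then applies dependent random choice to the bipartite graph $H=(V_1,V_2,E_{good})$ with $d=3k/\gamma$ and $l=4d$ to get $W_1\subset V_1$ so that every $d$-set in $W_1$ has $\geq n^{1/2}$ common $H$-neighbours, applies Erd\H{o}s--Moser inside $W_1$ and inside the common neighbourhood to get transitive sets $S_1\to S_2$ each of size $d$, fixes a matching $e_1,\ldots,e_d$ in $E_{good}$ between them, and then uses the K\H{o}v\'ari--S\'os--Tur\'an bound on the auxiliary bipartite graph between $\{e_1,\ldots,e_d\}$ and $V_3$ (which has $\geq d\gamma n/3=kn$ edges) to extract $k$ of the $e_i$'s with $n^{1/2}$ common triangle partners in $V_3$, and one final Erd\H{o}s--Moser gives $U_3$. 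In particular the $k/\gamma$ in the exponent comes from the dependent random choice parameter $d=3k/\gamma$ (one needs a matching of $\Theta(k/\gamma)$ good edges, not $k$, in order for the KST step to yield a $K_{k,n^{1/2}}$), not from $k$ rounds of $\gamma$-fraction survival.

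The gap in your proposal is exactly the ``main obstacle'' you name and then defer. Neither the ``cleaning step'' nor the ``two-level averaging'' is spelled out, and it is not clear either can be made to work: if at some stage every edge of $E'$ has essentially all of its $\gamma n$ triangle partners outside $A$, cleaning discards everything, and there is no mechanism to prevent this as $A$ shrinks. There is also a quantitative problem in the claimed accounting: building $V_3$ greedily takes $k$ iterations, and a $\gamma$-fraction preservation of $E'$ per step yields $\gamma^k\beta n^2$ surviving edges, i.e.\ an exponent proportional to $k\log(1/\gamma)$ with a $\beta$-independent base --- not the shape $\beta^{-O(k/\gamma)}$ that is needed. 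You assert ``the per-step shrinkage of $|E'|$ is controlled by $\gamma$ alone, which is what ultimately allows $\beta\gamma$ to be replaced by $\beta$ in the base and by $\gamma$ in the exponent,'' but the computation that would make $\beta$ reappear in the base with exponent $k/\gamma$ is not given, and the naive accounting does not produce it. So even granting some fix for the location issue, the claimed bound is not derived.
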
	
	
	The proof modifies the proof of Theorem 
	\ref{many directed triangles give a copy of D_k} in \cite{FS}, but 
	as the details are somewhat technical, we have included the proof 
	in full. We will use the following formulation of the dependent 
	random choice method (see \cite{DRC}).
	
	\begin{lem}
		\label{DRC}
	Let $G = (A,B,E)$ be a bipartite graph with 
	$|A| = |B| = n$ and $\alpha n^2$ edges. 
	Given $d, l \in {\mathbb N}$, there exists a 
	set 
	$A' \subset A$ with $|A'| \geq \alpha ^l n -1$ 
	such that every $d$-set in $A'$ has at least 
	$n^{1-d/l}$ common neighbours in $B$. \endproof
	\end{lem}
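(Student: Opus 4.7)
The plan is to follow the DRC-based strategy of the proof of Theorem \ref{many directed triangles give a copy of D_k} in \cite{FS}, exploiting the fact that the $\beta\gamma n^3$ triangles available here are all anchored on the $\beta n^2$ edges of $E$. For each $\overrightarrow{uv} \in E$ write $\Delta(u,v) := \{w \in V(T) : v \to w \text{ and } w \to u\}$, so $|\Delta(u,v)| \geq \gamma n$ by hypothesis. A copy of $D_k$ will be built in the form $V_1 \cup V_2 \cup V_3$ with each $V_i$ a transitive subtournament of size $k$, every $\overrightarrow{uv}$ with $u\in V_1, v\in V_2$ lying in $E$, and $V_3 \subset \Delta(u,v)$ for every such pair; transitivity inside each $V_i$ will be extracted from a candidate set of size $\geq 2^{k-1}$ via the Erd\H{o}s--Moser bound $T(k)\leq 2^{k-1}$.

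The first application of Lemma \ref{DRC} is to an auxiliary bipartite graph with vertex classes $V(T)$ and $E$, in which $w \sim e$ iff $w \in \Delta(e)$; this graph has density at least $\gamma$ on the $E$-side. Taking $d = k$ and $l$ of order $k/\gamma$ produces a subset $V_3^*\subset V(T)$ of size $\gtrsim \gamma^{O(k/\gamma)} n$, large enough to contain, by Erd\H{o}s--Moser, a transitive $V_3$ of size $k$, while keeping $E_{V_3}:=\{e\in E: V_3\subset \Delta(e)\}$ at a fixed polynomial fraction of $E$. The second application of Lemma \ref{DRC} treats $E_{V_3}$ as the edge set of a bipartite graph on $V(T)\sqcup V(T)$: it yields a set $V_1^*\subset V(T)$ whose every $k$-subset has common out-neighbourhood (under $E_{V_3}$) of size at least $2^{k-1}$. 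Erd\H{o}s--Moser extracts a transitive $V_1\subset V_1^*$ of size $k$, and a further Erd\H{o}s--Moser pass inside the common out-neighbourhood of $V_1$ supplies a transitive $V_2$ of size $k$. By construction $V_1\cup V_2\cup V_3$ is the required $D_k$ (and the three sets are automatically disjoint, since any overlap would force a self-loop in one of the directed triangles witnessing the structure).

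The main obstacle is parameter bookkeeping: the first DRC costs a factor $\gamma^{O(k/\gamma)}$ in the size of $V_3^*$, which is exactly where the $1/\gamma$ enters the final exponent, while the second DRC together with the two Erd\H{o}s--Moser extractions each need polynomial slack in $\beta^{-1}$ and in $2^k$. A secondary technical issue is that Lemma \ref{DRC} is stated in the balanced case $|A|=|B|=n$, whereas my first application is unbalanced ($|E|=\beta n^2$); one either uses a standard unbalanced restatement of the lemma or balances the parts by duplication, with no effect on the exponent. Balancing all the constraints yields the threshold $n \geq \beta^{-O(k/\gamma)}$, which matches the stated $n \geq \beta^{-100k/\gamma}$ with room to spare in the absolute constant.
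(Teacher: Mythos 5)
Your proposal does not prove the statement in question. The statement is Lemma \ref{DRC} itself --- the dependent random choice lemma --- which the paper imports from \cite{DRC} without proof. What you have written is instead an outline of a proof of Lemma \ref{many imbalanced triangles}, and that outline \emph{invokes} Lemma \ref{DRC} twice as a black box rather than establishing it. As an answer to the question actually asked there is therefore nothing to assess: no argument for the existence of a set $A'$ with the stated common-neighbourhood property appears anywhere in your text.

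For the record, the intended proof is the standard dependent random choice computation. Pick $u_1,\ldots ,u_l \in B$ uniformly and independently at random (with repetition) and let $A^*$ be the set of common neighbours of $u_1,\ldots ,u_l$ in $A$. By convexity, $\mathbb{E}|A^*| = \sum_{a\in A} (d(a)/n)^l \geq n\,(\sum_{a\in A} d(a)/n^2)^l = \alpha^l n$. Call a $d$-set $S \subset A$ \emph{bad} if it has fewer than $n^{1-d/l}$ common neighbours in $B$; the probability that a bad $S$ lies entirely in $A^*$ is $(|N(S)|/n)^l < (n^{-d/l})^l = n^{-d}$, so the expected number of bad $d$-sets contained in $A^*$ is less than $\binom{n}{d} n^{-d} \leq 1/d! \leq 1$. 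Hence $\mathbb{E}[\,|A^*| - \#\{\text{bad } d\text{-sets in } A^*\}\,] \geq \alpha^l n - 1$; fix an outcome attaining this expectation and delete one vertex from each bad $d$-set of $A^*$. The resulting set $A'$ has size at least $\alpha^l n - 1$ and contains no bad $d$-set, as required. (Separately, even read as a proposal for Lemma \ref{many imbalanced triangles}, your sketch diverges from the paper's route: the paper applies dependent random choice only once, to the good edges between two classes of a random tripartition, and handles the third class via a matching together with the K\"ovari--S\'os--Tur\'an bound \eqref{Zarankiewicz bound}, rather than via a second application of Lemma \ref{DRC}. But that is not the statement you were asked to prove.)
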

	
	We will also use of the following bound for the Zarankiewicz 
	problem, due to K\"ovari, S\'os and Tur\'an (see \cite{Zarank}, 
	\cite{KST}). Here it was shown that 
	any bipartite graph $G = (A,B,E)$, with $|A| = m$, $|B|=n$, 
	which does not contain $K_{s,t}$ as a subgraph, with $s$ 
	vertices in $A$ and $t$ in $B$ satisfies 
	\begin{align}
		\label{Zarankiewicz bound}
		e(G) \leq (s-1)^{1/t}(n-t+1)m^{1-1/t} + (t-1)m.
	\end{align}		
	
	\begin{proof}[Proof of Lemma \ref{many imbalanced triangles}]
	To begin, pick a random equipartition of $V(T)$ into three 
	sets $V_1, V_2$ and $V_3$, each with size $n/3$. For each 
	edge $e\in E$, let $Q^{(i)}_e$ denote the number of 
	vertices $v \in V_i$ which form a directed triangle with $e$ in $T$. 
	Let $E_{good}$ denote the collection of (random) edges 
	$e =\overrightarrow{xy} \in E$	with $x \in V_1$ to $y\in V_2$ and 
	$Q^{(3)}_e \geq \gamma n/3$. For all $e \in E$, we have
	\begin{align*}
		 {\mathbb P}(e \in E_{good}) =
		 {\mathbb P}(e \in \overrightarrow{V_1V_2} \mbox{ and } 
		Q^{(3)}_e \geq \gamma n/3) &= 
		{\mathbb P}(Q^{(3)}_e \geq \gamma n/3| e \in \overrightarrow{V_1V_2})
		\times 
		{\mathbb P}(e \in \overrightarrow{V_1V_2})\\
		& \geq  \frac{1}{3}\times \frac{|V_1||V_2|}{n(n-1)} \geq \frac{1}{27}.
		\numberthis \label{prob of edge being good}
	\end{align*}
	To see the inequality here, note that as
	$|V_3| \geq |V_1\setminus \{x\}|, |V_2 \setminus \{y\}|$ 
	we have ${\mathbb P}(Q^{(3)}_e \geq \gamma n/3| e\in \overrightarrow{V_1V_2}) 
	\geq {\mathbb P}(Q^{(i)}_e \geq \gamma n/3| e\in \overrightarrow{V_1V_2})$ 
	for $i\in \{1,2\}$. As $e\in E$ we also have  
	$\sum _{i=1}^3 Q_e^{(i)} \geq 
	\gamma n$ and so 
	\begin{align*}
	3{\mathbb P}(Q^{(3)}_e \geq \gamma n/3| e\in \overrightarrow{V_1V_2}) 
		& \geq 
	\sum _{i=1}^3 {\mathbb P}(Q^{(i)}_e
		\geq \gamma n/3| e\in 
	\overrightarrow{V_1V_2}) \\
		&\geq 
	{\mathbb P}(Q^{(i)}_e \geq \gamma n/3 \mbox { for some }i| e
	\in\overrightarrow{V_1V_2}) = 1.
	\end{align*}
	By \eqref{prob of edge being good} we have 
	${\mathbb E}(|E_{good}|) \geq |E|/27 \geq \beta n^2/27$. 
	Fix a partition with $|E_{good}|$ at least this big. 
	
	Now take $H$ to denote the bipartite graph between sets $V_1$ 
	and $V_2$ whose edge set is $E_{good}$. From the previous 
	paragraph $|e(H)| \geq 
	\beta n^2/27 = \frac{\beta}{3} (\frac{n}{3})^2$. Applying Lemma 
	\ref{DRC} to $H$ with $d = 3k/\gamma$ and $l=4d$ we can find a set 
	in $W_1 \subset V_1$ with $|W_1| \geq (\beta /3 )^{l}|V_1| -1 \geq n^{1/2}$ 
	such that every $d$-set in $W_1$ has at least $(n/3)^{1-d/l} = 
	(n/3)^{3/4} \geq n^{1/2}$ 
	common neighbours in $V_2$. The inequality on 
	$|W_1|$ here holds since 
	\begin{align*}
 	(\beta /3)^{l}|V_1| -1 \geq {\beta ^{3l}}\frac{n}{3} - 1 
 	\geq 2\beta ^{4l}n - 1 \geq 2n^{1/2} - 1 
 	\geq n^{1/2}. 
	\end{align*}
	using that $1/3 \geq \beta ^2$ and $\beta ^{l} \leq 1/6$ (since 
	$\beta \leq 1/2$, $l\geq 4$) and $n \geq \beta ^{-100k/\gamma } \geq 
	\beta ^{-8l}$.
	
	Now by applying the Erd\H{o}s--Moser theorem to $W_1$, we find 
	a transitive subtournament $T_1$ on vertex set $S_1 \subset W_1$ 
	with $|S_1| \geq \log |W_1| \geq \log n^{1/2} \geq d$. Letting 
	$N_H[S_1] \subset V_2$ denote the common neighbourhood of $S_1$ 
	in $H$, by choice of $W_1$ we 
	have $|N_H[S_1]| \geq n^{1/2}$. Again apply the Erd\H{o}s--Moser 
	theorem to $N_H[S_1]$, we find $S_2 \subset N_H(S_1)$ with 
	$|S_2| \geq \log |N_H[S_1]| = \log n^{1/2} \geq d$ vertices. By 
	the construction of $H$, this gives that all edges of $T$ 
	between $S_1$ and $S_2$ are directed from $S_1$ to $S_2$.
		
	For the next section of the argument, fix a matching of size 
	$d$ within this bipartite directed subgraph $T[S_1,S_2]$, 
	say with edges 	$\{e_1,\ldots ,e_d\}$. 
	As each edge $e_i \in E_{good}$, we have
	$Q^{(3)}_{e_i} \geq \gamma n/3$ for all $i\in [d]$.
	Now consider the bipartite graph $G$ on vertex set 
	$A = \{e_1,\ldots ,e_d\}$ and $V_3$ in which 
	$e_i \in A$ is joined to $v\in V_3$ if together 
	the vertices of $e_i$ and $v$ form a 
	directed triangle in $T$. As $Q_{e_i} \geq \gamma n/3$ for 
	all $i\in [d]$, we see $e(G) \geq d\gamma n/3 = kn$. 
	
	We now claim that in $G$ there exists $A' \subset A$ and 
	$V_3' \subset V_3$ with $|A'| \geq k$ and $|V_3'| \geq 
	n^{1/2}$ such that $G[A',V_3']$ is complete. Indeed, by 
	\eqref{Zarankiewicz bound}, 
	if $G$ does not contain a complete bipartite 
	subgraph $G'$ with $k$ vertices in $A$ and 
	$n^{1/2}$ vertices in $V_3$, then the 
	number of edges in $G$ satisfies	
	\begin{align*}
	e(G) 
		& < 
	(n^{1/2} - 1)^{1/k}(d - k + 1)(n/3)^{1-1/k} 
	+ 
	(k - 1)n/3\\ 
		& < 
	(dn^{- 1/2k} + k/3)n 
		 \leq 
	5kn/6 < e(G).
	\end{align*} 
	To see the second last inequality, note that $n^{1/2k} \geq  
	\beta ^{-12/\gamma } \geq 2^{12/\gamma } \geq e^{6/\gamma }
	\geq 6/\gamma $, as $\beta \leq 1/2$ and $e^x \geq x$ for all $x$.
	This gives $dn^{-1/2k} \leq d\gamma /6 = k/2$. This contradiction 	
	shows that must exist some set of $k$ edges $\{e_{i_1},\ldots ,e_{i_k}\} 
	\subset A$ which is completely joined to a set $W_3\subset V_3$ of 
	size at least $n^{1/2}$. To complete the proof of the 
	lemma, apply the Erd\H{o}s--Moser theorem a final time to $W_3$ to find 
	a transitive subtournament of size $\log n^{1/2} > d > k$ 
	on vertex set $S_3$. For $i=1,2$, let $U_i$ denote the sets $U_i \subset V_i$ 
	which occur in the edges $\{e_{i_1},\ldots ,e_{i_k}\}$. Also let $U_3 \subset 
	S_3$ be a set with $|U_3| = k$. 
	
	We claim that $T[U_1\cup U_2 \cup U_3]$ forms a subtournament isomorphic 
	to $D_k$. Indeed, $|U_i| = k$ for all $i\in [3]$ and $T[U_i]$ is transitive 
	since $U_i \subset S_i$. Also, all edges in $T$ between $U_1$ and $U_2$ are 
	directed from $U_1$ to $U_2$, since $U_i \subset S_i$. Lastly, from 
	the definition of $H$, each $u \in U_3$ forms a directed triangle in $T$ with 
	$e_{i_j}\in \overrightarrow{U_1U_2}$ for all $j \in [k]$ giving that 
	all edges of $T$ are directed from $U_2$ to $U_3$ and from $U_3$ to $U_1$.
	\end{proof}
	
	We can now complete the proof of Theorem \ref{improved D_k bound}.	
	
\begin{proof}[Proof of Theorem \ref{improved D_k bound}]
	Take $c\geq 1$ to be a constant such that Theorem \ref{F_k subtournament} holds 
	and set $C = 2^{33}c$. We will show that an $n$-vertex tournament $T$ 
	which is $\epsilon $-far from being transitive contains $D_k$ as 
	a subtournament, provided $n\geq \epsilon ^{-Ck}$. 
	
	To begin,
	choose $i\in {\mathbb N} \cup \{0\}$ as large as possible so that $T$ contains a 
	subtournament $T'$ satisfying $|T'| \geq |T|/8^i$ and such that 
	$T'$ is $(2^i\epsilon )$-far from being transitive. Let $|T'| = t \geq n/8^i$ 
	and list the vertices of $T'$ in an optimal ordering $v_1,\ldots ,v_t$. 
	Letting $B$ denote the backwards edges of $T'$ and $|B| = \alpha t^2$, 
	we have $\alpha \geq 2^i\epsilon $. In particular, since $\alpha \leq 1$ 
	we have $1/2^i \geq \epsilon$. Now by the choice of $i$, the conclusion of  
	Lemma \ref{many long edges or density boost} part 2 fails for $T'$. Lemma 
	\ref{many long edges or density boost} therefore guarantees that the subset 
	$B'$ of $B$ consisting of edges of length at least $t/16$ satisfies 
	$|B'| \geq |B|/4 = \alpha t^2/4$. 
	
	We first consider the case when $\alpha > 2^{-16}$. Here we apply Theorem 
	\ref{F_k subtournament} to $T'$ taking advantage of the fact that $\alpha $ 
	is quite large. Indeed, as $T'$ is $\alpha 
	$-far from being transitive, by Theorem \ref{F_k subtournament} we find 
	that $T'$ contains $D_k$ as a subtournament, provided $t 
	\geq \alpha ^{-ck/\alpha ^2}$. This holds as
	\begin{align*}
		t \geq n / 8^i \geq n\epsilon ^{3}
		  \geq \epsilon ^{-Ck +3}
		  \geq \epsilon ^{-Ck/2}
		  \geq \alpha ^{-Ck/2}
		  \geq
		  \alpha ^{-2^{32}ck} \geq \alpha ^{-ck/\alpha ^2}.
	 \end{align*}
	Here we used that $1/2^i \geq \epsilon $, that $C\geq 6$ and $k\geq 1$ and 
	that $\alpha \geq \epsilon $.
	
	Now we deal with the case when $\alpha \leq 2^{-16}$. We can apply Lemma 
	\ref{many disjoint triangles} to $T'$ taking $B$ and $B'$ as given above, 
	to find a subset $B'' \subset B'$, satisfying $|B''| \geq |B'|/2 \geq 
	(\alpha /8) t^2$ with the 
	property that each edge of $B''$ lies in at least $t/64$ directed triangles 
	in $T'$. We now apply Lemma 
	\ref{many imbalanced triangles} to $T'$ taking $E = B''$, 
	$\beta = \alpha /8 $ and 
	$\gamma = 1/64$. This shows that $T'$ contains a copy of $D_k$, provided that 
	$|T'| = t \geq \beta ^{-100k/\gamma } = \beta ^{-6400 k}$. To see that this 
	holds, 
	first note that $t \geq n/8^i \geq 
	\epsilon ^{-Ck}/8^i \geq \epsilon ^{-Ck +3} \geq \epsilon ^{-Ck/2} $ 
	as $C\geq 6$. 
	Using $\beta \geq 2^i\epsilon /8 \geq \epsilon /8 
	\geq \epsilon ^4$ (since $1/2 \geq \epsilon $) gives 
	$t \geq \epsilon ^{-Ck/2} \geq \beta ^{-Ck/8} 
	\geq \beta ^{-2^{30}ck} \geq \beta ^{-6400k}$, as required.
	\end{proof}

\noindent \textbf{Acknowledgements:} I would like to thank Asaf Shapira for sharing a copy of his preprint with Raphael Yuster \cite{SY} and the referee for many useful comments.

\end{document}